\documentclass[12pt]{article}
 \usepackage{alltt}
\usepackage{latexsym}
\usepackage{graphicx}
\pagestyle{empty}
\textwidth 7.25 in
\textheight 9.5in
\hoffset-1in
\voffset-1.0in
\pagestyle{myheadings}

\usepackage{latexsym}
\usepackage{graphicx}
\usepackage{amssymb}
\usepackage{amsmath}
\usepackage{rotating}

\newtheorem{lemma}{Lemma}

\newenvironment{proof}{{\bf Proof.}}{\hfill $\Box$ \bigskip}
\usepackage{stmaryrd} 

\title{Global Attractivity of the Equilibrium \\ of a Difference Equation: \\
{\large An Elementary Proof Assisted by Computer Algebra System}}
\author{Orlando Merino \\
Department of Mathematics \\
University of Rhode Island\\
Kingston, RI 02881\\
{\it merino@math.uri.edu}
\date{ }
}


\begin{document}
\maketitle
\abstract{
Let $p$ and $q$ be arbitrary positive numbers.  
It is shown that if $q < p$, then all solutions to the difference equation
\begin{equation*}
\tag{E}
x_{n+1} = \frac{p+q\, x_n}{1+x_{n-1}}, \quad n=0,1,2,\ldots, \quad x_{-1}>0,\  x_0>0
\end{equation*}
converge to the positive equilibrium
$\overline{x} = \frac{1}{2}\left( q-1 + \sqrt{(q-1)^2 + 4 \,p}\right)$.
\medskip

The above result, taken together with the 1993 result of Koci\'c and Ladas
for equation (E) with $q \geq p$, gives global attractivity
of the positive equilibrium of (E) for all positive values of the parameters,
thus completing the proof of a conjecture of Ladas.
}
\vspace{1 in}


\vfill

{Key Words:} difference equation, rational, second order, global attractivity, equilibrium
 \newline
{\footnotesize AMS 2000 Mathematics Subject Classification:  39A11} 
\newpage
\section{Introduction and Statement of Main Theorem}
The difference equation
\begin{equation*}
\tag{E}
x_{n+1} = \frac{p+q\, x_n}{1+x_{n-1}}, \quad n=0,1,2,\ldots, \quad x_{-1}>0,\  x_0>0
\end{equation*}
has a unique equilibrium 
$\overline{x} = \frac{1}{2}\left( q-1 + \sqrt{(q-1)^2 + 4 \,p}\right)$.
In 1993, V. Koci\'c and G. Ladas  proved
in \cite{KoL93} (namely, part (e) of Theorem 3.4.3)
that for $q\geq p > 0$, all solutions to Eqn.(E)
converge to the equilibrium.
Since then the region of positive parameters $p,\ q$
for which global attractivity of the equilibrium holds 
has been extended in several publications:
V. Kocic, G. Ladas, and I. W. Rodrigues \cite{KLR}, 
C. H. Ou, H. S. Tang and W. Luo \cite{OTL00},
H. A. El-Morshedy \cite{EM03},
R. Nussbaum \cite{N07},  
V. Jim\'enez L\'opez  \cite{J08}.  
However, there is a large region of parameters 
that is not covered by the results obtained 
in those publications (see Figure \ref{fig: 0}).
Indeed, a well known conjecture of Ladas states  
{\it the positive equilibrium to {\rm (E)} is a global attractor
for all positive values of the parameters $p$ and $q$},
(for example, see \cite{Ladas95b}, \cite{KuL01}, \cite{KuLMR03}).
The authors in reference \cite{LZS} claim to have proved the conjecture,
but there is a mistake in their proof, see \cite{J08} for details on this.

The main result of this work, given below, 
establishes global attractivity of the equilibrium of (E)
for $q<p$, thus completing the proof of the conjecture.
\bigskip

\noindent {\bf Main Theorem } 
{\it
If $0<q<p$, then all solutions to the difference equation {\rm (E)}
converge to the positive equilibrium
$\overline{x}$.
}

\begin{figure}[!ht]
\setlength{\unitlength}{1in}
\begin{center}
\begin{picture}(3,3)
\put(0,0){\includegraphics[width=3in]{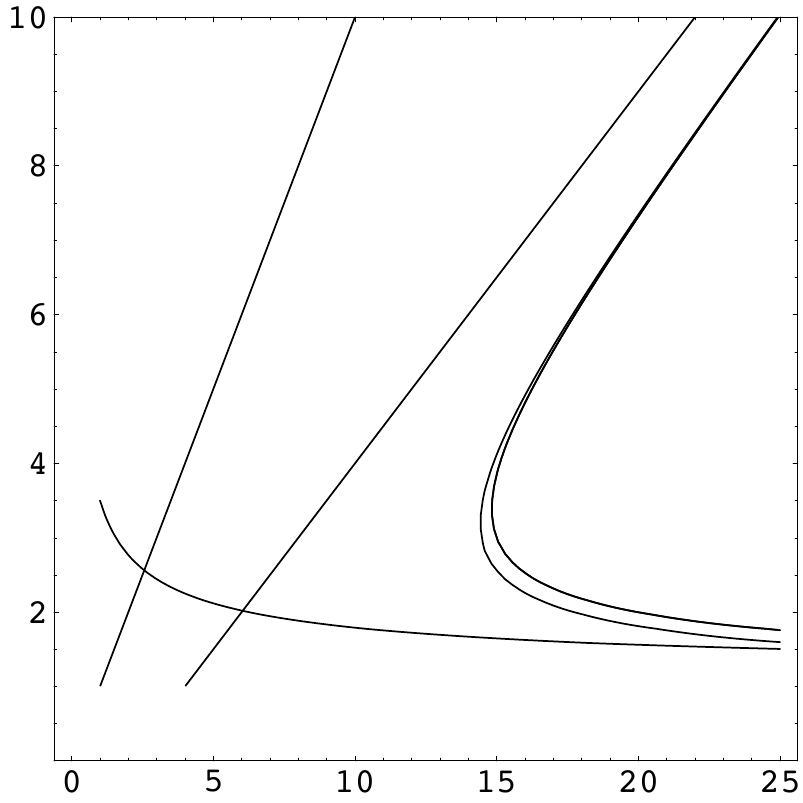}}
\put(3.2,0){$p$}
\put(-0.25,2.8){$q$}
\put(0.7,2){(a)}
\put(1.6,2){(b)}
\put(1.55,1){(c)}
\put(2,0.9){(d)}
\put(1.3,0.45){(e)}
\put(2.4,1.2){\circle*{0.1}}
\put(2.4,1.32){$(20,4)$}
\end{picture}
\end{center}
\label{fig: 0}
\caption{Boundaries of regions in parameter space.
Key: (a) $q \geq p$ (Koci\'c-Ladas, 1993), 
(b) $2(q+1) \geq p$ (Koci\'c-Ladas-Rodrigues, 1993),
(c) $ 2(q^3-q^2+q+\sqrt{q^4-1}-1)/(q-1)^2\geq p$ (Ou-Tang-Luo, 2000),
(d) $\frac{1}{2} \left(q+\sqrt{(q-1)^2+4
   p}-1\right)\leq \frac{q^2+1}{q-1}$ (El-Morshedy, 2003, and Nussbaum, 2007)
(e) $4\, p\, (q-1)^2\geq 25$ Jim\'enez L\'opez (2008).
It is interesting to note that Nussbaum obtained El-Morshedy's bound
expressed in different form.  
Also, Jim\'enez L\'opez' region  is an improvement
on regions obtained previously by 
Ou-Tang-Luo, El-Morshedy, and Nussbaum
for $q$ near $1$ and $p\geq 112.1$ (not shown in the figure).
The point $(p,q) = (20,4)$ is in the exterior of all regions
bounded by the different curves.
}
\end{figure}

\newpage

The study of Eq.(E) is facilitated by the introduction of a suitable 
change  of variables, and thus we begin with it.
The substitutions 
\begin{equation}
\label{eq: change of var}
p\, =\, \frac{\alpha}{A^2}, \quad q\, =\, \frac{1}{A}, \quad \mbox{ and }
\quad  \ x_\ell\,  =\,  q \, y_\ell \ ( \, \ell =-1,0,2,\ldots \, )
\end{equation}
 in Eq.(E) yield the equation
\begin{equation}
\label{eq: y2k2}
y_{n+1} = \frac{\alpha + y_n}{A+y_{n-1}}, \quad n=0,1,2,\ldots,
\quad y_{-1},\, y_0 \in (0,\infty)
\end{equation}
The equilibria $\overline{x}$ of Eq.(E) and $\overline{y}$ 
of Eq.(\ref{eq: y2k2}) satisfy
\begin{equation}
\overline{x} = q\, \overline{y}
\end{equation}
One may view Lyness' equation (see \cite{Ladas95}),
\begin{equation}
\label{eq: Lyness}
z_{n+1} = \frac{\tilde{\alpha} + z_n}{z_{n-1}}, \quad n=0,1,2,\ldots,
\quad z_{-1},\, z_0 \in (0,\infty)
\end{equation}
as a limit case of (\ref{eq: y2k2}) that results by setting $A=0$.
Lyness' equation is known  to posess an {\it invariant function} \cite{KUM}
i.e., a continuous, nonconstant real valued function $g(x,y)$ 
defined on $(0,\infty)\times (0,\infty)$ 
such that every solution $\{z_n\}$ to (\ref{eq: Lyness}) satisfies 
\begin{equation}
\label{eq: invariant 00}
g(z_{n+1},z_n) = g(z_0,z_{-1}), \quad n=0,1,2,\ldots
\end{equation}
A formula for $g(x,y)$ was found by Lyness \cite{Lyness} for $\tilde{\alpha}=1$,
and for positive $\tilde{\alpha}$ by Ladas \cite{Ladas95}:
\begin{equation}
\label{eq: invariant}
g(x,y) = \frac{(1+x)(1+y)(\tilde{\alpha}+x+y)}{x\, y}
\end{equation}
It is easy matter to show that $g(x,y)$ has a unique critical point on the positive quadrant,
namely $(\overline{z},\overline{z})$,
where $\overline{z}$ is the equilibrium of Eq.(\ref{eq: Lyness}).
Since $g(x,y)$ is large near the boundary of  the positive quadrant, $g(x,y)$ has 
a strict global minimum which is is attained at $(\overline{z},\overline{z})$
(see \cite{Zeeman}, \cite{Kulenovic}), i.e., 
\begin{equation}
\label{ineq: minimum}
g(\overline{z},\overline{z}) < g(x,y) \quad \mbox{for}\quad 
(x,y) \in (0,\infty)\times (0,\infty), \quad (x,y) \neq (\overline{z},\overline{z}).
\end{equation}
The function $g(x,y)$ plays a fundamental role in our proof.
See Figure \ref{fig: 1}.
\begin{figure}[!ht]
\centerline{\includegraphics[width=2in]{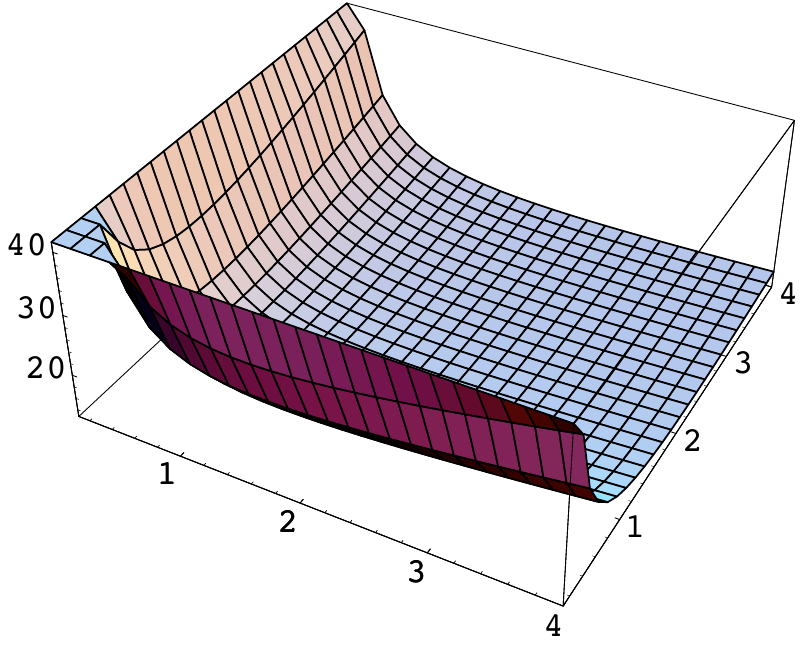}
\hspace{1 in} \includegraphics[width=2in]{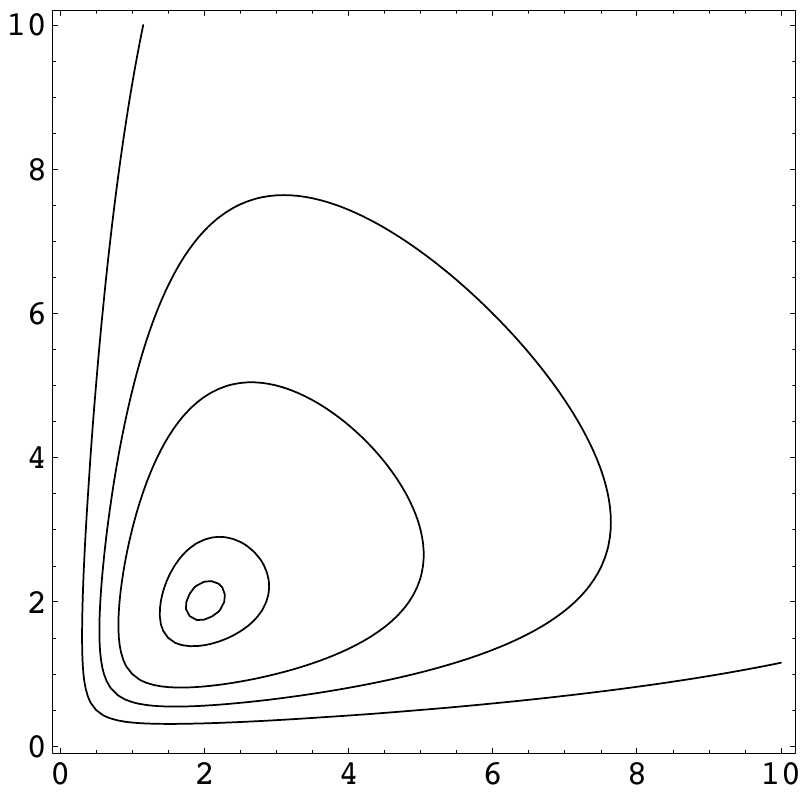}}
\caption{3D-Plot and contour plot of the invariant function $g(x,y)$ for $\tilde{\alpha}=2$.}
\label{fig: 1}
\end{figure}

In Section \ref{section: proofs} we prove several lemmas 
before giving the proof of the main theorem,
which is done
in order to simplify the exposition.
For practical reasons,
some of the calculations were performed with the computer algebra
system (or CAS) Mathematica \cite{Mathematica}.  
Code for such calculations, written in 
the Mathematica language,
is given in Section \ref{section: Mathematica}.

\bigskip

\section{Proofs}
\label{section: proofs}
For typographical convenience we will use the symbol $u$ to represent 
the equilibrium $\overline{y}$ of Eq.(\ref{eq: y2k2}).
By direct substitution of the equilibrium $u=\overline{y}$ into (\ref{eq: y2k2}) we obtain
\begin{equation}
\label{eq: alpha}
\alpha = u^2 + (A-1)\,u
\end{equation}
By (\ref{eq: alpha}), $\alpha > 0$ implies $u > 1-A$.
Using (\ref{eq: alpha}) to eliminate $\alpha$ from (\ref{eq: y2k2}) gives the following equation, equivalent to (E):
\begin{equation}
\label{eq: y2k2u}
y_{n+1} = \frac{u^2 +(A-1)\,u + y_n}{A+y_{n-1}}, \quad n=0,1,2,\ldots,
\quad y_{-1},\, y_0 \in (0,\infty)
\end{equation}
Therefore, to prove the main theorem it suffices to 
prove that all solutions of Eq.(\ref{eq: y2k2u})
converge to the equilibrium $u$, and this is what 
the rest of the proof is geared to do.

The following statement is crucial for the proof of the main theorem.
\begin{lemma}
\label{lemma: ybar > 1}
$u > 1$ if and only if $q < p$.
\end{lemma}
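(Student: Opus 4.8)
The plan is to translate the inequality $u>1$ into a condition on the parameters using only the relations already in hand, and to reduce the whole statement to a single factorization. From the change of variables \eqref{eq: change of var} we have $A = 1/q$ and $\alpha = p\,A^2$, so the hypothesis $q<p$ is equivalent (multiply through by $A^2>0$) to $A<\alpha$, i.e. to $\alpha - A > 0$. Hence it suffices to show that $\alpha - A > 0$ holds precisely when $u>1$.

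First I would insert the equilibrium relation \eqref{eq: alpha}, namely $\alpha = u^2 + (A-1)\,u$, into the quantity $\alpha - A$ and factor:
\begin{equation*}
\alpha - A \,=\, u^2 + (A-1)\,u - A \,=\, (u-1)(u+A).
\end{equation*}
Since $u=\overline{y}=\overline{x}/q>0$ and $A=1/q>0$, the factor $u+A$ is strictly positive, so the sign of $\alpha - A$ coincides with the sign of $u-1$. Chaining the two observations gives the chain of equivalences $q<p \iff \alpha - A > 0 \iff (u-1)(u+A) > 0 \iff u > 1$, which is exactly the asserted biconditional.

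There is no real obstacle here beyond bookkeeping: the only thing to check carefully is that every step in the chain is genuinely reversible, and it is, because each step either multiplies by a strictly positive quantity ($A^2$) or divides out the strictly positive factor $u+A$, both of which preserve the equivalence. As a sanity check one can also argue directly in the original variables: $u>1$ is equivalent to $\overline{x}>q$, and evaluating the equilibrium quadratic $f(t)=t^2+(1-q)\,t-p$ (whose unique positive root is $\overline{x}$, with $f<0$ on $(0,\overline{x})$ since $f(0)=-p<0$ and the parabola opens upward) at $t=q$ gives $f(q)=q-p$, so $\overline{x}>q \iff f(q)<0 \iff q<p$; this recovers the same conclusion and confirms the factorization above.
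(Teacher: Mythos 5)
Your proof is correct, but it takes a different route from the paper's. The paper argues entirely in the original variables: since $\overline{x}=q\,u$ with $q>0$, the condition $u>1$ is equivalent to $\overline{x}>q$, and it then substitutes the explicit radical formula $\overline{x}=\frac{1}{2}\bigl(q-1+\sqrt{(q-1)^2+4p}\bigr)$ and simplifies the inequality $\frac{1}{2}\bigl(q-1+\sqrt{(q-1)^2+4p}\bigr)>q$ down to $q<p$ (an "elementary simplification" that tacitly requires isolating the radical and squaring, justified because $q+1>0$). You instead stay in the transformed variables: $q<p$ becomes $\alpha>A$ after multiplying by $A^2>0$, and the equilibrium relation $\alpha=u^2+(A-1)u$ gives the clean factorization $\alpha-A=(u-1)(u+A)$, whose sign is that of $u-1$ since $u+A>0$. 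Your version buys transparency: no radicals appear, every equivalence is a polynomial identity or a multiplication by a visibly positive quantity, and there is no hidden squaring step to justify. The paper's version buys brevity and stays in the variables in which the conjecture is stated, at the cost of leaving the reversibility of the squaring to the reader. Your closing "sanity check" — evaluating $f(t)=t^2+(1-q)t-p$ at $t=q$ to get $f(q)=q-p$ — is in fact a radical-free repackaging of the paper's own route, so you have effectively given both arguments, with the factorization $(u-1)(u+A)$ being the genuinely new ingredient.
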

\begin{proof}
Since $\overline{x} = q\, \overline{y} = q\, u$, 
we have $u>1$ if and only if $\overline{x} > q$, which holds if and only if 
$\frac{1}{2}\left( q-1 + \sqrt{(q-1)^2 + 4 \,p}\right) > q$.
After an elementary simplification, the latter inequality 
can be rewritten as $q < p$.
\end{proof}

The hypothesis of the main theorem states $q<p$.
In view of Lemma \ref{lemma: ybar > 1} we
assume in the rest of the exposition that 
\begin{equation}
\label{assumption u>1}
u \ > \ 1
\end{equation}
For the equilibrium $u$ of Eq.(\ref{eq: y2k2u}) to be also the the positive equilibrium of Lyness' equation
(\ref{eq: Lyness}) one must have
\begin{equation}
u = \frac{\tilde{\alpha}+u}{u}
\end{equation}
that is,
\begin{equation}
\label{eq: tilde alpha}
\tilde{\alpha} = u^2 - u
\end{equation}
For the value of $\tilde{\alpha}$ found in 
(\ref{eq: tilde alpha}) the invariant function (\ref{eq: invariant}) becomes 
\begin{equation}
\label{eq: invariantu}
g(x,y) = \frac{(1+x)(1+y)(u^2-u+x+y)}{x\, y}
\end{equation}
One would hope that $g(x,y)$ turns out to be a {\it Lyapunov function} 
for Eq.(\ref{eq: y2k2u}) (see \cite{Kulenovic}). 
We shall see in the proof of Lemma \ref{lemma: Q2Q4} 
that this is not the case. 
Nevertheless we will be able to use the function $g(x,y)$  to complete the proof
of the main theorem.
First we need some elementary properties of the sublevel sets
$$
S(c) := \{ (s,t) : g(s,t) \leq c \}\, , \quad c>0
$$

We denote with $Q_\ell(u,u)$, $\ell=1,2,3,4$ the four regions
$$
\begin{array}{rcl}
Q_1(u,u) & := & \{ (x,y) \in (0,\infty)\times(0,\infty)\, : \ u \leq x,\ u\leq y \ \} \\  \\ 
Q_2(u,u) & := & \{ (x,y) \in (0,\infty)\times(0,\infty)\, : \ x \leq u,\ u\leq y \ \} \\ \\ 
Q_3(u,u) & := & \{ (x,y) \in (0,\infty)\times(0,\infty)\, : \ x \leq u,\ y\leq u \ \} \\ \\ 
Q_4(u,u) & := & \{ (x,y) \in (0,\infty)\times(0,\infty)\, : \ u \leq x,\ y\leq u \ \} \\ \\ 
\end{array}
$$
Let 
\begin{equation}
T(x,y) := \left(\, y\, ,\, \frac{u^2+(A-1)u + y}{A+x}\, \right), \quad (x,y) \in (0,\infty)\times(0,\infty)
\end{equation}
be the map associated to Eq. (\ref{eq: y2k2u})
(see \cite{KUM}).
\begin{lemma}
\label{lemma: Q2Q4}
If $(x,y) \in Q_2(u,u) \cup Q_4(u,u) \setminus \{ (u,u) \}$, 
then $g(T(x,y)) < g(x,y)$.
\end{lemma}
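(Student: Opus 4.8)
The plan is to exploit the fact that the invariant function $g$ is \emph{constant} along orbits of Lyness' equation (\ref{eq: Lyness}), whose associated map, with the value $\tilde\alpha = u^2-u$ from (\ref{eq: tilde alpha}), is $L(x,y) = (y,v)$ where $v := (u^2-u+y)/x$. By the invariance property (\ref{eq: invariant 00}) together with the symmetry $g(s,t)=g(t,s)$ evident from (\ref{eq: invariantu}), one has $g(y,v)=g(x,y)$. Since $T(x,y)=(y,w)$ with $w := (u^2+(A-1)u+y)/(A+x)$ has the \emph{same} first coordinate $y$ as $L(x,y)$, I would reduce the claim to the one-variable comparison
\[
g(T(x,y)) - g(x,y) \;=\; g(y,w) - g(y,v),
\]
that is, to comparing the values of the single-variable section $s\mapsto g(y,s)$ at the two points $w$ and $v$.

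First I would analyze this section for fixed $y$. Writing $\beta := u^2-u+y$, which is positive because $u>1$, a direct expansion of (\ref{eq: invariantu}) gives $g(y,s) = \tfrac{1+y}{y}\left(s + \tfrac{\beta}{s} + 1 + \beta\right)$, a strictly convex ``bowl'' on $(0,\infty)$ with minimum at $s=\sqrt{\beta}$. Subtracting the two values, the additive constants cancel and the difference factors cleanly as
\[
g(y,w) - g(y,v) \;=\; \frac{1+y}{y}\cdot\frac{(w-v)(wv-\beta)}{wv}.
\]
Because $\tfrac{1+y}{y}>0$ and $w,v>0$, the sign of $g(T(x,y))-g(x,y)$ is exactly the sign of the product $(w-v)(wv-\beta)$, and the goal becomes to show this product is strictly negative away from $(u,u)$.

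Next I would compute the two factors explicitly and read off their signs using the shifted coordinates $a := u-x$ and $b := u-y$. A short calculation gives $w-v = A(b-ua)/\big(x(A+x)\big)$, so $\operatorname{sign}(w-v)=\operatorname{sign}(b-ua)$; likewise, using $u^2-u+Au+y=\beta+Au$, one finds $wv-\beta = \beta\,[\,a(u+x+A)-b\,]/\big(x(A+x)\big)$, so $\operatorname{sign}(wv-\beta)=\operatorname{sign}\!\big(a(u+x+A)-b\big)$. On $Q_2(u,u)$ one has $a\geq 0$ and $b\leq 0$, which forces $b-ua\leq 0$ and $a(u+x+A)-b\geq 0$; on $Q_4(u,u)$ one has $a\leq 0$ and $b\geq 0$, which forces $b-ua\geq 0$ and $a(u+x+A)-b\leq 0$. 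In either quadrant the two factors carry opposite signs, hence the product is $\leq 0$ and $g(T(x,y))\leq g(x,y)$.

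The hard part is not the sign bookkeeping but the algebra producing the factorization of $wv-\beta$; this is precisely the simplification I would delegate to the CAS, and likewise the cancellation yielding the product form of the difference. The remaining point is strictness. Working in $Q_4$, the quantity $b-ua = b + u(-a)$ is a sum of two nonnegative terms, so it vanishes only when $a=b=0$, i.e. at $(x,y)=(u,u)$; the same degenerate case is the only zero of $a(u+x+A)-b$. Consequently, for any $(x,y)\in Q_4(u,u)\setminus\{(u,u)\}$ \emph{both} factors are nonzero with opposite sign, so the product is strictly negative; the argument on $Q_2(u,u)$ is symmetric. This gives the strict inequality $g(T(x,y)) < g(x,y)$ on $Q_2(u,u)\cup Q_4(u,u)\setminus\{(u,u)\}$, as claimed.
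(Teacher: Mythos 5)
Your proposal is correct — I verified the algebra ($w-v = A(b-ua)/\bigl(x(A+x)\bigr)$, $wv-\beta = \beta\bigl(a(u+x+A)-b\bigr)/\bigl(x(A+x)\bigr)$, and the factored difference of the convex section) and the sign bookkeeping and strictness argument both hold — but you reach the key identity by a genuinely different route than the paper. The paper computes $\Delta_1(x,y)=g(x,y)-g(T(x,y))$ directly (CAS-assisted), observes that it factors as a positive quantity times $(u-u^2+ux-y)(u-Au-u^2+Ax+x^2-y)$, and argues the sign geometrically: both factors vanish on increasing curves (a line and a parabola) through $(u,u)$, and each factor keeps a fixed sign on $Q_2(u,u)\cup Q_4(u,u)\setminus\{(u,u)\}$. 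Your two factors are exactly the paper's: $b-ua$ \emph{is} the line factor, and $a(u+x+A)-b$ is the negative of the parabola factor, so the final sign analysis (done by you algebraically through $a,b$ rather than through the curves) carries the same content. What is genuinely different is the derivation: by invoking the Lyness invariance $g(y,v)=g(x,y)$ with $v=(u^2-u+y)/x$, and noting that $T(x,y)=(y,w)$ shares its first coordinate with the Lyness image, you reduce everything to comparing the one-variable section $s\mapsto s+\beta/s$ at $s=w$ and $s=v$, where the difference factors as $(w-v)(wv-\beta)/(wv)$ by elementary algebra. This buys a fully hand-checkable proof — no computer algebra needed for this lemma — and a structural explanation of \emph{why} the factorization exists: the line is where $T$ agrees with the Lyness map, and the parabola is where $w$ equals the reflection $\beta/v$ of $v$ across the minimum of the convex section. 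The paper's brute-force computation, by contrast, is uniform with its treatment of Lemma \ref{lemma: Q1Q3}, where no such clean reduction is available and CAS assistance is genuinely needed.
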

\begin{proof}
Set
\begin{equation}
\Delta_1(x,y) := g(x,y)-g(T(x,y)) 
\end{equation}
A calculation yields
\begin{equation}
\label{eq: Delta1}
\Delta_1(x,y)  =
\frac{A\,\left( u - u^2 + u\,x - 
      y \right) \,
    \left( u - A\,u - u^2 + A\,x + 
      x^2 - y \right) \,
    \left( 1 + y \right) }{x\,
    \left( A + x \right) \,y\,
    \left( -u + A\,u + u^2 + y
      \right)}
\end{equation}
By inspecting the factors in (\ref{eq: Delta1}) one can see that $\Delta_1(x,y)$ changes sign
on  the line $y=u - u^2 + u\,x$ and on the 
parabola $y=u - A\,u - u^2 + A\,x + x^2$.  Both curves are the graphs of strictly increasing 
functions of $x$ that intersect in the positive quadrant
$\{(x,y): x>0,\ y>0\}$ at a unique point, namely
 $(u,u)$. At the 
point $(u,u)$, the slope of the parabola, $2\,u+A$, is
larger than the slope of the line, $u$. See Figure \ref{fig: 2}.
Clearly $\Delta_1(x,y)$ becomes negative for points $(x,y)$ between both curves,
which, except for $(u,u)$,  are contained in the complement of $Q_2(u,u)\cup Q_4(u,u)$.
\begin{figure}[!ht]
\setlength{\unitlength}{1in}
\begin{center}
\begin{picture}(2,1.5)
\put(0,0){\includegraphics[width=2in]{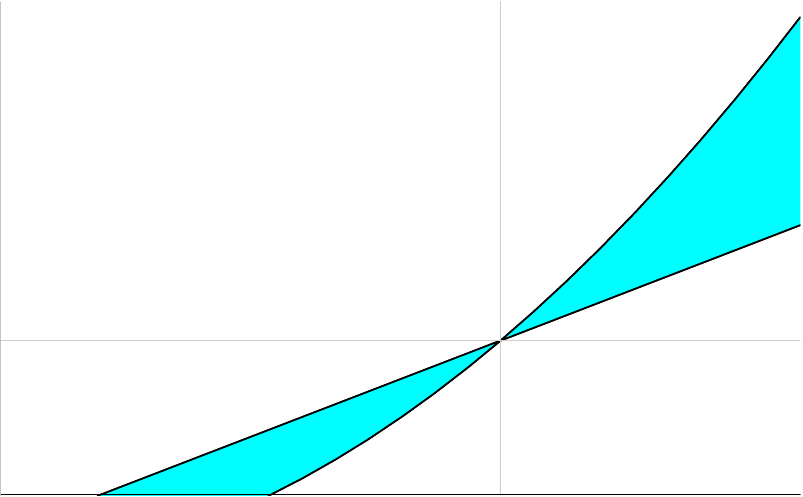}}
\put(1.2,-0.15){$u$}
\put(-0.2,0.34){$u$}
\put(1.9,-0.15){$x$}
\put(-0.2,1.2){$y$}
\end{picture}
\end{center}
\caption{$\Delta_1(x,y)<0$ for $(x,y)$ in the 
interior of the shaded region 
bounded by the  line $y=u - u^2 + u\,x$ and  the 
parabola $y=u - A\,u - u^2 + A\,x + x^2$.}
\label{fig: 2}
\end{figure}
\end{proof}
\begin{lemma}
\label{lemma: Q1Q3}
If $(x,y) \in Q_1(u,u) \cup Q_3(u,u) \setminus \{ (u,u) \}$, 
then $g(T^2(x,y)) < g(x,y)$.
\end{lemma}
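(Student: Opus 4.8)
The plan is to mirror the previous lemma one step deeper, writing the two-step change of $g$ as a telescoping sum of the one-step changes already understood. Set $\Delta_2(x,y) := g(x,y) - g(T^2(x,y))$. From
\begin{equation*}
\Delta_2(x,y) = \big[g(x,y)-g(T(x,y))\big] + \big[g(T(x,y))-g(T^2(x,y))\big]
\end{equation*}
we get the identity $\Delta_2(x,y) = \Delta_1(x,y) + \Delta_1(T(x,y))$, where $\Delta_1$ is the function computed in (\ref{eq: Delta1}). It suffices to prove $\Delta_2>0$ on $Q_1(u,u)\cup Q_3(u,u)\setminus\{(u,u)\}$, and this identity lets me feed Lemma \ref{lemma: Q2Q4} into the argument whenever the intermediate point $T(x,y)$ lands in the off-diagonal quadrants.

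First I would record how $T$ moves the four regions. Writing the second coordinate of $T(x,y)$ as $z$, a short computation gives $z-u = \big((y-u)-u(x-u)\big)/(A+x)$, from which one reads off $T(Q_2)\subseteq Q_1$, $T(Q_4)\subseteq Q_3$, $T(Q_1)\subseteq Q_1\cup Q_4$, and $T(Q_3)\subseteq Q_2\cup Q_3$. More precisely, for $(x,y)\in Q_1$ the image $T(x,y)$ lies in $Q_4$ exactly when $y$ is on or below the line $y=u-u^2+u\,x$ of (\ref{eq: Delta1}), and in $Q_1$ otherwise; the mirror statement holds on $Q_3$, with $Q_4$ replaced by $Q_2$ and \emph{on or below} replaced by \emph{on or above}. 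This dichotomy splits each diagonal quadrant into an \emph{escaping} part and a \emph{lingering} part.

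On the escaping part of $Q_1$ (on or below the line), the sign analysis of the factors of (\ref{eq: Delta1}) shows $\Delta_1(x,y)\ge 0$, while $T(x,y)\in Q_4\setminus\{(u,u)\}$ gives $\Delta_1(T(x,y))>0$ by Lemma \ref{lemma: Q2Q4}; hence $\Delta_2>0$. The escaping part of $Q_3$ is handled identically through $Q_2$. This disposes of everything except the lingering parts, where $T(x,y)$ remains in the same diagonal quadrant.

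The lingering parts are the crux. There both $\Delta_1(x,y)$ and $\Delta_1(T(x,y))$ may be negative — indeed the lens between the line and the parabola of (\ref{eq: Delta1}), on which $\Delta_1<0$, sits precisely inside these parts — so Lemma \ref{lemma: Q2Q4} gives no leverage and the telescoping identity cannot be signed term by term. Here I would compute $\Delta_2(x,y)=g(x,y)-g(T^2(x,y))$ explicitly in the CAS, clear denominators, and factor the resulting rational function, then verify that its numerator keeps a constant positive sign on the lingering region, vanishing only at $(u,u)$, using the standing assumption $u>1$ together with $A>0$. I expect this final factor-sign verification to be the main obstacle: passing to $T^2$ raises the degree substantially, and the target region is now delimited by the slanted line $z=u$ rather than by the zero-curves of the individual factors, so the positivity will not be visible from an inspection of a few linear and quadratic factors as it was for $\Delta_1$, and a genuine sign decision over the constrained region will be required.
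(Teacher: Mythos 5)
Your telescoping reduction is correct as far as it goes. The identity $\Delta_2(x,y)=\Delta_1(x,y)+\Delta_1(T(x,y))$ is valid; your mapping properties of $T$ are right (writing the second coordinate as $z$, indeed $z-u=\bigl((y-u)-u(x-u)\bigr)/(A+x)$, so $T(Q_2)\subseteq Q_1$, $T(Q_4)\subseteq Q_3$, and the escaping/lingering dichotomy on $Q_1,Q_3$ is governed by the line $y=u-u^2+u\,x$); and the escaping case does close: on $Q_1$ the parabola of (\ref{eq: Delta1}) lies above the line, since their difference is $(A+x)(x-u)$, so on or below the line both sign-changing factors of $\Delta_1$ are nonnegative, while $T(x,y)\in Q_4\setminus\{(u,u)\}$ (note $T(x,y)=(u,u)$ forces $(x,y)=(u,u)$) gives $\Delta_1(T(x,y))>0$ by Lemma \ref{lemma: Q2Q4}. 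The mirror argument on $Q_3$ is also fine. This is an attractive partial reduction that the paper does not use.

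The gap is the lingering part, and you have named it yourself without closing it. There both $\Delta_1(x,y)$ and $\Delta_1(T(x,y))$ can be negative, and your plan collapses to ``compute $\Delta_2$ with a CAS and verify its numerator is positive on the lingering region'' --- but that verification \emph{is} the lemma; stating that it is the main obstacle is not a proof of it. What the paper supplies, and what your proposal is missing, is the technique that makes such a sign decision certifiable by a CAS: it proves positivity of the numerator of $\Delta_2$ on \emph{all} of $Q_1\setminus\{(u,u)\}$ (which contains your lingering part) by substituting $x=x_0+u$, $y=y_0+u$ with $x_0,y_0\ge 0$, then splitting about the diagonal ($y_0=x_0+k$, $k\ge 0$, and the symmetric cases) and setting $u=1+t$ with $t>0$, after which every monomial coefficient is a positive integer; on $Q_3$ it first maps the open positive quadrant onto the interior of $Q_3$ by $x=u\,w/(w+1)$, $y=u\,v/(v+1)$ and repeats the same scheme, with separate treatment of the boundary segments. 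These substitutions convert a constrained positivity question into an inspection of coefficients. Your reduction actually makes this final step harder, not easier: the lingering region is bounded by the slanted line $y=u-u^2+u\,x$, so it admits no comparably simple polynomial parametrization onto an orthant, whereas the full quadrants $Q_1$ and $Q_3$ do. So either you must produce substitutions adapted to the lingering region (you propose none), or you should drop the escaping/lingering split and prove positivity on the whole of $Q_1\cup Q_3$ --- at which point the telescoping machinery is unnecessary and you have reproduced the paper's proof.
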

\begin{proof}
Set
\begin{equation}
\label{eq: Delta2 definition}
\Delta_2(x,y) := g(x,y)-g(T^2(x,y)) 
\end{equation}
A calculation yields
\begin{equation}
\label{eq: Delta2}
\Delta_2(x,y)  =
\frac{- A^2\,u^3  + 
  A^3\,u^3 + A^4\,u^3 - A^5\,u^3 + 
  \cdots 
  \mbox{\it 386 monomials}
  \cdots
  + 2\,A^2\,u\,x^2\,y^4 + 
  A\,u\,x^3\,y^4 + A\,y^5}
{x\,\left( A + x \right) \,y\,
  \left( A + y \right) \,
  \left( -u + A\,u + u^2 + y \right)
    \,\left( -u + A^2\,u + u^2 + 
    A\,u^2 - u\,x + A\,u\,x + 
    u^2\,x + y \right)}
\end{equation}
Since $u^2-u>0$ by (\ref{assumption u>1}), 
the denominator of $\Delta_2(x,y)$ is positive.
We now proceed to prove that the numerator of $\Delta_2(x,y)$,
denoted by $Num$,
is positive on
$Q_1(u,u)\setminus\{(u,u)\}$. Begin by changing variables:
$x = x_0+u$, $y=y_0+u$, where $x_0\geq0$, $y_0\geq0$ to get
\begin{equation}
\label{ineq: num}
Num = A\,u^4\,{x_0}^2 + 
  A\,u^5\,x_0^2 + 
  2\,A\,u^6\,x_0^2 + 
  2\,A\,u^7\,x_0^2 + 
  2\,A\,u^3\,x_0^3 - 
  A\,u^4\,x_0^3 +\cdots 
  \mbox{287 monomials}
\end{equation}
Since some monomials in the right-hand-side of 
(\ref{ineq: num})  have negative coefficient,
it is not obvious that $Num > 0$ for $x_0\geq 0$, $y_0\geq0$,
$(x_0,y_0)\neq (0,0)$.  The last step here consists in considering
subcases ($y_0>x_0$, $y_0<x_0$, and $y_0=x_0$).
Substituting $y_0 = x_0+k$ and $u=1+t$ for $k\geq 0$ and $t>0$,
we obtain 
\begin{equation}
\label{eq: num Delta2 on Q1}
Num = 
2\,A\,k^2 + 8\,A^2\,k^2 + \cdots \mbox{368 monomials} \cdots
 + 2\,A\,t\,x_0^7
\end{equation}
By using a computer algebra system one can show that
the  coefficients of the monomials that form the expression
in the right-hand-side of 
(\ref{eq: num Delta2 on Q1}) are all positive integers.
Thus $Num>0$ whenever 
(a) $x_0>0$, $y_0\geq x_0$.
It can be easily verified that  $Num>0$ in the other cases:
(b) $y_0>0$, $y_0\leq x_0$, 
(c) $x_0=0$, $y_0>0$, and 
(d) $x_0>0$, $y_0=0$.
We conclude that $\Delta_2(x,y)>0$ on 
$Q_1(u,u)\setminus \{(u,u)\}$.
See Section \ref{section: Mathematica} for details.

To verify that $Num>0$ on $Q_3(u,u)\setminus \{(u,u)\}$,
we consider first $Num$ on the interior of $Q_3(u,u)$, and  
introduce the algebraic transformation
$$
x=\frac{u\, w}{w+1}, \quad y = \frac{u \, v}{v+1},
\quad (w,v) \in (0,\infty)\times (0,\infty)
$$
that maps   
the open positive quadrant $\{(w,v) : w>0,\ v>0 \}$
one-to-one and onto the interior of 
$Q_3(u,u)$.
Then substitute $u=1+t$ with $t>0$, and consider 
subcases above, below, and on the diagonal just as we did 
before to conclude by direct inspection of coefficients that the transformed expression
for $Num$ is positive.

A similar strategy works for proving 
$\Delta_2(x,y) > 0$ on the open line segment  with endpoints 
$(u,0)$ and $(u,u)$, and on the open line segment
with endpoints $(0,u)$ and $(u,u)$.
See Section \ref{section: Mathematica} for details.
\end{proof}
\medskip

\noindent {\bf Proof of the Main Theorem.} 
Let 
$(\phi,\psi ) \in (0,\infty)\times(0,\infty)$.
Let $\{y_n\}_{n\geq-1}$ be the solution to (\ref{eq: y2k2u})
with initial condition $(y_{-1},y_0)=(\phi,\psi)$,
and let $\{T^n(\phi,\psi)\}_{n\geq 0}$ be the corresponding orbit of $T$.
Define 
\begin{equation}
\label{eq: hatc def}
\hat{c} := \liminf_n g(T^n(\phi,\psi))  
\end{equation}
Note that $\hat{c} < \infty$, which  can be shown by 
applying Lemma \ref{lemma: Q2Q4} and Lemma \ref{lemma: Q1Q3} repeatedly as needed
to obtain a nonincreasing subsequence of 
$\{g(T^n(\phi,\psi))\}_{n\geq 0}$ that is bounded below by $0$.
Let $\{g(T^{n_k}(\phi,\psi))\}_{k\geq 0}$ be a subsequence
convergent to $\hat{c}$.
Therefore there exists $c>0$ such that 
$$
g(T^{n_k}(\phi,\psi)) \leq c \quad \mbox{ for all $k\geq 0$,}
$$
that is, 
$$T^{n_k}(\phi,\psi)) \in S(c) := \{ (s,t) : g(s,t) \leq c \}\quad \mbox{for } k \geq 0
$$
The set $S(c)$ is closed by continuity of $g(x,y)$.
Boundedness of $S(c)$ follows from  
$$0< x,\, y <   
\frac{(1+x)(1+y)(u^2-u+x+y)}{x\, y} = g(x,y) = c ,
\quad \mbox{for} \quad (x,y) \in S(c)
$$
Thus $S(c)$ is compact, and 
there exists a convergent subsequence 
$\{ T^{n_{k_\ell}}(\phi,\psi))\}_\ell$ with limit $(\hat{x},\hat{y})$ (say).
Note that
\begin{equation}
\label{eq: hat c = g(xhat,yhat)}
\hat{c} = \lim_{\ell \rightarrow \infty} g(T^{n_{k_\ell}}(\phi,\psi))
= g(\hat{x},\hat{y})
\end{equation}
We claim that $(\hat{x},\hat{y}) = (u,u)$.
If not, then by 
Lemma \ref{lemma: Q2Q4} and Lemma \ref{lemma: Q1Q3},
\begin{equation}
\label{eq: cont ineq}
\min \{ g(T(\hat{x},\hat{y})),g(T^2(\hat{x},\hat{y}))\} < \hat{c}
\end{equation}
Let $\| \cdot \|$ denotes the euclidean norm.
By (\ref{eq: cont ineq}) and continuity, there exists $\delta > 0$ such that 
\begin{equation}
\label{eq: by continuity}
\| (s,t) - (\hat{x},\hat{y}) \| < \delta \implies 
\min \{ g(T(s,t)),g(T^2(s,t))\} < \hat{c}
\end{equation}
Choose $L\in \mathbb{N}$ large enough so that 
\begin{equation}
\label{eq: choose L}
\| T^{n_{k_L}}(\phi,\psi) - (\hat{x},\hat{y}) \| < \delta
\end{equation}
But then (\ref{eq: by continuity}) and (\ref{eq: choose L}) imply
\begin{equation}
\min \{ g(T^{n_{k_L}+1}(\phi,\psi)),g(T^{n_{k_L}+2}(s,t))\} < \hat{c}
\end{equation}
which contradicts the definition (\ref{eq: hatc def}) of $\hat{c}$.
We conclude $(\hat{x},\hat{y}) = (u,u)$.
From this and the definition of convergence of sequences we have that for every $\epsilon > 0$ 
there exists $L \in \mathbb{N}$ such that
$\| T^{n_{k_L}}(\phi,\psi) - (u,u) \| < \epsilon$.
Finally, since 
$$
\max \left\{ |y_{n_{k_L}-1} - u | , |y_{n_{k_L}} - u | \right\}
 \leq \| (y_{n_{k_L}-1} - u , y_{n_{k_L}} - u ) \|
 = \| T^{n_{k_L}}(\phi,\psi) - (u,u) \| 
$$
we have that for every $\epsilon > 0$ there exists 
$L \in \mathbb{N}$ such that
$|y_{n_{k_L}} - u |<\epsilon$ and 
$|y_{n_{k_L}-1} - u |<\epsilon$.
Since  $u$ is a locally asymptotically stable 
equilibrium for Eq.(\ref{eq: y2k2u})
(this is Lemma 3.4.1 in page 67 of \cite{KoL93}), it follows that 
$y_n \rightarrow u$.
This completes the proof of the Main Theorem.
\hfill $\Box$
\bigskip

\noindent {\bf Acknowledgment}
The author thanks M. Kulenovi\'c for 
helpful discussions and for suggesting
the change of variables (\ref{eq: change of var}) which established
a direct connection of the main difference equation
to Lyness' equation, and J. Montgomery for suggesting 
a simplification of the proof.
\bigskip

\section{Bibliography}

\newpage
\section{Appendix: Mathematica Code}
\label{section: Mathematica}

\begin{table}[!ht]
\centerline{\includegraphics[width=7in]{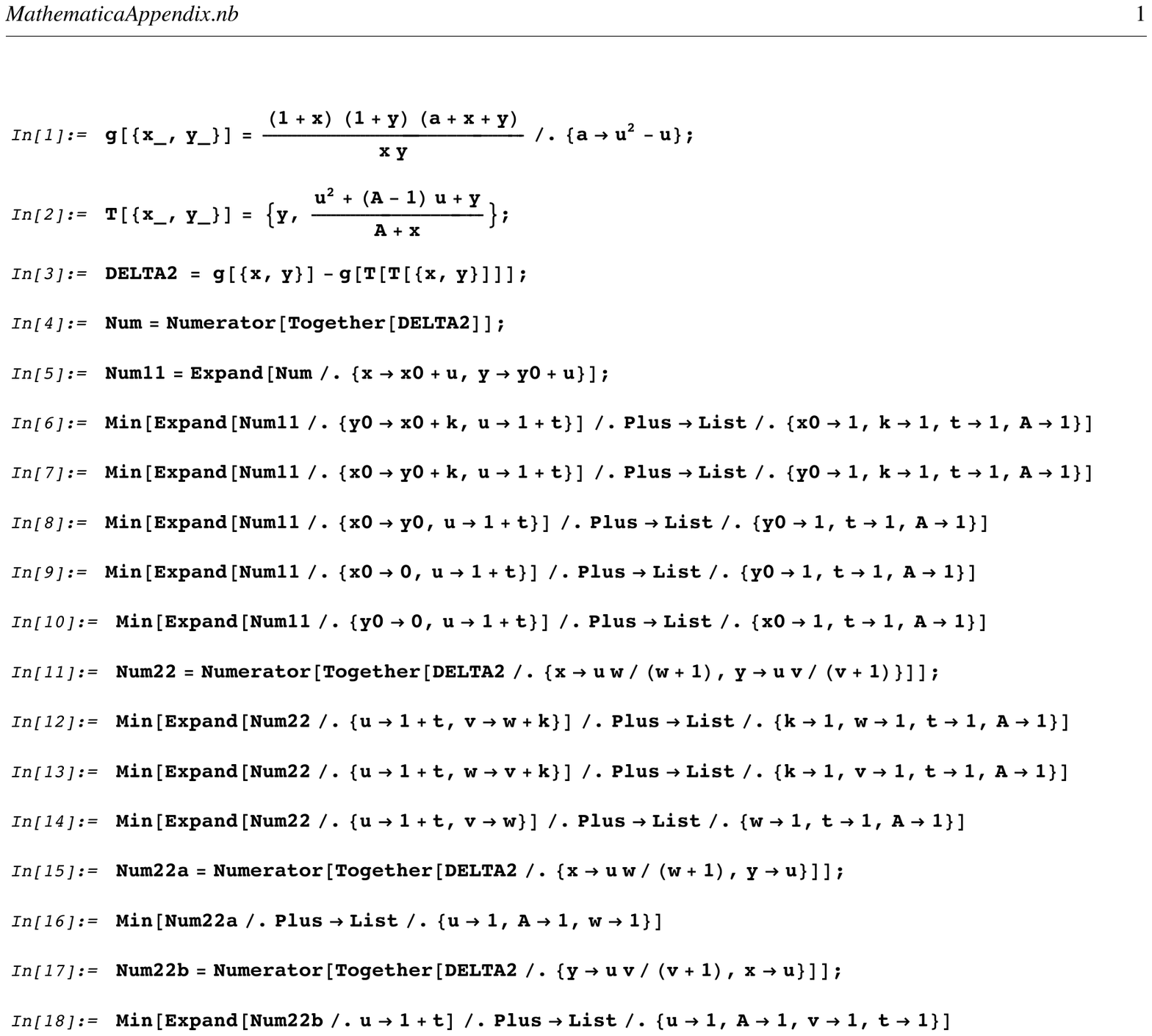}}
\caption{Mathematica code needed to do the calculations in Lemma \ref{lemma: Q1Q3}. 
  This input was tested on Mathematica Version 5.0
\cite{Mathematica}.
See Table \ref{table: two} for meaning of the input.}
\label{table: one}
\end{table}

\begin{table}[!ht]
\begin{center}
\begin{tabular}{|c|l|}
\hline & \\
Input \# & Description of input \\
& \\
\hline & \\
{\tt In[1]} & Define the invariant function for Lyness' equation. \\
& \\
{\tt In[2]} & Define the map associated to difference equation (\ref{eq: y2k2u}). \\
& \\
{\tt In[3]} & Define the expression $\Delta_2(x,y)$ given in (\ref{eq: Delta2 definition}). \\
& \\
{\tt In[4]} & Define word $Num$ to be numerator of expression $\Delta_2(x,y)$. \\
& \\
{\tt In[5]} & Replace $x$ and $y$ in $Num$ by $x_0+u$ and $y_0+u$. \\
& \\
{\tt In[6]} & Calculate smallest coefficient of monomials in $Num11$ for $y_0>x_0$. \\
& \\
{\tt In[7]} & Calculate smallest coefficient of monomials in $Num11$ for $x_0>y_0$.\\
& \\
{\tt In[8]} &  Calculate smallest coefficient of monomials in $Num11$ for $y_0=x_0$. \\
& \\
{\tt In[9]} &  Calculate smallest coefficient of monomials in $Num11$ on line $\{(0,y_0):\,y_0>0\}$. \\
& \\
{\tt In[10]} &  Calculate smallest coefficient of monomials in $Num11$ on line $\{(x_0,0):\,x_0>0\}$. \\
& \\
{\tt In[11]} & Numerator of map that takes $\mbox{int}\,Q_3(u,u)$ to the positive quadrant in the plane. \\
& \\
{\tt In[12]} & Calculate smallest coefficient of monomials in $Num22$ for $v>u$.  \\
& \\
{\tt In[13]} & Calculate smallest coefficient of monomials in $Num22$ for $v<u$.  \\
& \\
{\tt In[14]} & Calculate smallest coefficient of monomials in $Num22$ for $v=u$.  \\
& \\
{\tt In[15]} & Numerator of map that takes (open) line segment joining $(0,u)$, $(u,u)$ to $(0,\infty)$. \\
& \\
{\tt In[16]} &  Calculate smallest coefficient of monomials in $Num22a$. \\
& \\
{\tt In[17]} &  Numerator of map that takes (open) line segment joining $(u,0)$, $(u,u)$ to $(0,\infty)$. \\
& \\
{\tt In[18]} &  Calculate smallest coefficient of monomials in $Num22b$. \\
& \\
\hline 
\end{tabular}
\end{center}
\caption{Explanation for Mathematica input 
that appears in Table \ref{table: one}.}
\label{table: two}
\end{table}

\end{document}